\newtheorem{theorem}{Theorem}[section]
\newtheorem{lemma}[theorem]{Lemma}
\theoremstyle{definition}
\newtheorem{definition}[theorem]{Definition}
\newtheorem*{xclaim}{Claim}
\theoremstyle{remark}
\newtheorem{remark}[theorem]{Remark}
\numberwithin{equation}{section}
\begin{document}

\title[Representations of certain Banach algebras]{Representations of certain Banach algebras}

\author{M.R. Koushesh}

\address{Department of Mathematical Sciences, Isfahan University of Technology, Isfahan 84156--83111, Iran}
\address{School of Mathematics, Institute for Research in Fundamental Sciences (IPM), P.O. Box: 19395--5746, Tehran, Iran}
\email{koushesh@cc.iut.ac.ir}
\thanks{This research was in part supported by a grant from IPM (No. 90030052).}

\subjclass[2010]{Primary 54D35, 54D65, 46J10, 46J25, 46E25, 46E15; Secondary 54C35, 46H05, 16S60}

%\date{October 17, 2011 and, in revised form, .}

\keywords{Stone--\v{C}ech compactification; Commutative Gelfand--Naimark Theorem; Gelfand Theory; Real Banach algebra; Functions vanishing at infinity; Functions with compact support.}

\begin{abstract}
For a space $X$ denote by $C_b(X)$ the Banach algebra of all continuous bounded scalar-valued functions on $X$ and denote by $C_0(X)$ the set of all elements in $C_b(X)$ which vanish at infinity.

We prove that certain Banach subalgebras $H$ of $C_b(X)$ are isometrically isomorphic to $C_0(Y)$, for some unique (up to homeomorphism) locally compact Hausdorff space $Y$. The space $Y$ is explicitly constructed as a subspace of the Stone--\v{C}ech compactification of $X$. The known construction of $Y$ enables us to examine certain properties of either $H$ or $Y$ and derive results not expected to be deducible from the standard Gelfand Theory.
\end{abstract}

\maketitle

%\tableofcontents

\section{Introduction}

By a {\em space} we mean a {\em topological space}; completely regular spaces are Hausdorff. Throughout this article the underlying field of scalars (which is fixed throughout each discussion) is assumed to be either the real field $\mathbb{R}$ or the complex field $\mathbb{C}$, unless specifically stated otherwise.

Let $X$ be a completely regular space. We denote by $C_b(X)$ the set of all continuous bounded scalar-valued functions on $X$. If $f\in C_b(X)$, the {\em zero-set} of $f$, denoted by $\mathrm{Z}(f)$, is $f^{-1}(0)$, the {\em cozero-set} of $f$, denoted by $\mathrm{Coz}(f)$, is $X\backslash \mathrm{Z}(f)$, and the {\em support} of $f$, denoted by $\mathrm{supp}(f)$, is $\mathrm{cl}_X\mathrm{Coz}(f)$. Let
\[\mathrm{Coz}(X)=\big\{\mathrm{Coz}(f):f\in C_b(X)\big\}.\]
The elements of $\mathrm{Coz}(X)$ are called {\em cozero-sets} of $X$. Denote by $C_0(X)$ the set of all $f\in C_b(X)$ which vanish at infinity (that is, $|f|^{-1}([\epsilon,\infty))$ is compact for each $\epsilon>0$) and denote by $C_{00}(X)$ the set of all $f\in C_b(X)$ with compact support.

This work is a continuation (though it is self-contained) of our previous work \cite{Ko6} (and \cite{Ko10}; see also the follow-up preprint \cite{Ko12}) in which we have studied the Banach algebra of continuous bounded scalar-valued functions with separable support defined on a locally separable metrizable space $X$. Indeed, this article is an outgrowth of author's unsuccessful (partly successful, however) attempt to give an alternative proof of the celebrated commutative Gelfand--Naimark Theorem. We show that certain Banach subalgebras $H$ of $C_b(X)$ are representable as $C_0(Y)$ for some unique locally compact Hausdorff space $Y$. We construct $Y$ explicitly as a subspace of the Stone--\v{C}ech compactification of $X$. The known construction of $Y$ enables us to examine certain properties of either $H$ or $Y$ and derive  results not expected to be deducible from the standard Gelfand Theory.

We now briefly review certain facts from General Topology. Additional information may be found in \cite{E} and \cite{GJ}.

\subsection*{1.1. The Stone--\v{C}ech compactification.} Let $X$ be a completely regular space. By a {\em compactification} $\gamma X$ of $X$ we mean a compact Hausdorff space $\gamma X$ containing $X$ as a dense subspace. The {\em Stone--\v{C}ech compactification} $\beta X$ of $X$ is the compactification of $X$ which is characterized among all compactifications of $X$ by the following property: Every continuous $f:X\rightarrow K$, where $K$ is a compact Hausdorff space, is continuously extendable over $\beta X$; denote by $f_\beta$ this continuous extension of $f$. The Stone--\v{C}ech compactification of a completely regular space always exists. In what follows use will be made of the following properties of $\beta X$.
\begin{itemize}
  \item $X$ is locally compact if and only if $X$ is open in $\beta X$.
  \item Any open-closed subspace of $X$ has open-closed closure in $\beta X$.
  \item If $X\subseteq T\subseteq\beta X$ then $\beta T=\beta X$.
  \item If $X$ is normal then $\beta T=\mathrm{cl}_{\beta X}T$ for any closed subspace $T$ of $X$.
\end{itemize}

\subsection*{1.2. Locally-$\mathscr{P}$ spaces.} Let $\mathscr{P}$ be a topological property. A space $X$ is called {\em locally-$\mathscr{P}$}, if each $x\in X$ has an open neighborhood $U$ in $X$ whose closure $\mathrm{cl}_XU$ has $\mathscr{P}$.

\subsection*{1.3. Metrizable spaces and separability.} The {\em density} of a space $X$, denoted by $d(X)$, is defined by
\[d(X)=\min\big\{|D|:D\mbox{ is dense in }X\big\}+\aleph_0.\]
In particular, a space $X$ is separable if and only if $d(X)=\aleph_0$. Note that in any metrizable space the three notions of separability, being Lindel\"{o}f, and second countability coincide; thus any subspace of a separable metrizable space is separable. By a theorem of Alexandroff, any locally separable metrizable space $X$ can be represented as a disjoint union
\[X=\bigcup_{i\in I}X_i,\]
where $I$ is an index set, and $X_i$ is a non-empty separable open-closed subspace of $X$ for each $i\in I$. (See Problem 4.4.F of \cite{E}.) Note that $d(X)=|I|$ if $I$ is infinite.

\subsection*{1.4. Paracompact spaces and the Lindel\"{o}f property.} Let $X$ a regular space. For any open covers $\mathscr{U}$ and $\mathscr{V}$ of $X$ we say that $\mathscr{U}$ is a {\em refinement} of $\mathscr{V}$ if every element of $\mathscr{U}$ is contained in an element of $\mathscr{V}$. An open cover $\mathscr{U}$ of $X$ is called {\em locally finite} if each point of $X$ has an open neighborhood in $X$ intersecting only a finite number of the elements of $\mathscr{U}$. The space $X$ is called {\em paracompact} if for every open cover $\mathscr{U}$ of $X$ there is an open cover of $X$ which refines $\mathscr{U}$. Every metrizable space is paracompact and every paracompact space is normal. The {\em Lindel\"{o}f number} of $X$, denoted by $\ell(X)$, is defined by
\[\ell(X)=\min\{\mathfrak{n}:\mbox{any open cover of $X$ has a subcover of cardinality}\leq\mathfrak{n}\}+\aleph_0.\]
In particular, $X$ is Lindel\"{o}f if and only if $\ell(X)=\aleph_0$. Any locally compact paracompact space $X$ can be represented as a disjoint union
\[X=\bigcup_{i\in I}X_i,\]
where $I$ is an index set, and $X_i$ is a non-empty Lindel\"{o}f open-closed subspace of $X$ for each $i\in I$. (See Theorem 5.1.27 of \cite{E}.) Note that $\ell(X)=|I|$ if $I$ is infinite.

\section{The representation theorem}

The subspace $\lambda_H X$ of $\beta X$ defined below plays a crucial role in our study.

\begin{definition}\label{HGA}
Let $X$ be a completely regular space and let $H\subseteq C_b(X)$. Define
\[\lambda_H X=\bigcup\big\{\mathrm{int}_{\beta X}\mathrm{cl}_{\beta X}\mathrm{Coz}(h):h\in H\big\}.\]
\end{definition}

The above definition of $\lambda_H X$ is motivated by the definition of $\lambda_{\mathscr P} X$ (here ${\mathscr P}$ is a topological property) as given in \cite{Ko3} (also, in \cite{Ko4}, \cite{Ko5}, \cite{Ko13} and \cite{Ko11}). Note that $\lambda_H X$ is open in $\beta X$ and is thus locally compact.

Recall that if $X$ is a space and $D$ is a dense subspace of $X$, then
\[\mathrm{cl}_XU=\mathrm{cl}_X(U\cap D)\]
for any open subspace $U$ of $X$.

\begin{lemma}\label{KHF}
Let $X$ be a completely regular space and let $H\subseteq C_b(X)$ such that
\begin{itemize}
\item[\rm$(*)$] For any $x\in X$ there is some $h\in H$ with $h(x)\neq0$.
\end{itemize}
Then
\[X\subseteq\lambda_H X.\]
\end{lemma}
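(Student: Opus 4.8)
The plan is to show that an arbitrary point $x\in X$ lies in $\lambda_H X$ by exhibiting a single $h\in H$ whose cozero-set, after taking closure in $\beta X$, contains $x$ in its interior. By hypothesis $(*)$ there is some $h\in H$ with $h(x)\neq 0$. Intuitively, $h$ being continuous and nonzero at $x$ means $x$ sits in the open set $\mathrm{Coz}(h)\subseteq X$, and since $X$ is open in $\beta X$ whenever $X$ is locally compact one might hope $x$ is automatically in the interior of the closure; but we are not assuming local compactness of $X$, so the argument must be done more carefully inside $\beta X$.

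First I would fix $x\in X$ and choose $h\in H$ with $h(x)\neq 0$, say $|h(x)|=c>0$. The key step is to produce an open neighborhood of $x$ \emph{in $\beta X$} that is contained in $\mathrm{cl}_{\beta X}\mathrm{Coz}(h)$. To do this I would consider the continuous extension $h_\beta:\beta X\to K$ (where $K$ is a compact set containing the bounded range of $h$, so $h_\beta$ exists by the defining property of $\beta X$). Since $h_\beta$ is continuous and $|h_\beta(x)|=|h(x)|=c>0$, the set $U=|h_\beta|^{-1}\big((c/2,\infty)\big)$ is an open neighborhood of $x$ in $\beta X$. On this set $h_\beta$ is nonzero, so $U\subseteq\mathrm{Coz}(h_\beta)$.

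Next I would connect $U$ back to $\mathrm{Coz}(h)$ in $X$. Because $X$ is dense in $\beta X$, the recalled identity $\mathrm{cl}_{\beta X}V=\mathrm{cl}_{\beta X}(V\cap X)$ holds for any open $V\subseteq\beta X$; applying this to $V=U$ gives $\mathrm{cl}_{\beta X}U=\mathrm{cl}_{\beta X}(U\cap X)$. Now $U\cap X\subseteq\mathrm{Coz}(h)$, since on $X$ we have $h_\beta=h$ and points of $U$ have $|h_\beta|>0$. Therefore
\[
U\subseteq\mathrm{cl}_{\beta X}U=\mathrm{cl}_{\beta X}(U\cap X)\subseteq\mathrm{cl}_{\beta X}\mathrm{Coz}(h).
\]
Since $U$ is open in $\beta X$ and contained in $\mathrm{cl}_{\beta X}\mathrm{Coz}(h)$, we get $U\subseteq\mathrm{int}_{\beta X}\mathrm{cl}_{\beta X}\mathrm{Coz}(h)$, and hence $x\in U\subseteq\lambda_H X$ by the definition of $\lambda_H X$. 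As $x\in X$ was arbitrary, this proves $X\subseteq\lambda_H X$.

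The step I expect to be the main (though modest) obstacle is the transition from $U\subseteq\mathrm{Coz}(h_\beta)$ in $\beta X$ to the statement $U\subseteq\mathrm{int}_{\beta X}\mathrm{cl}_{\beta X}\mathrm{Coz}(h)$, because one must be careful that $\mathrm{Coz}(h)$ is the cozero-set of $h$ in $X$, not the cozero-set of $h_\beta$ in $\beta X$; the two differ, and the density identity is exactly what bridges them. The clean way to handle this is the chain of inclusions above, which uses density of $X$ precisely once and avoids any appeal to local compactness, so the lemma holds for every completely regular $X$.
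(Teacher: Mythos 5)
Your proof is correct and takes essentially the same route as the paper's: both pass to the extension $h_\beta$, exhibit an open subset of $\beta X$ containing $x$ and contained in $\mathrm{Coz}(h_\beta)$, and apply the density identity $\mathrm{cl}_{\beta X}V=\mathrm{cl}_{\beta X}(V\cap X)$ once to conclude that this open set lies in $\mathrm{int}_{\beta X}\mathrm{cl}_{\beta X}\mathrm{Coz}(h)\subseteq\lambda_H X$. The only difference is cosmetic: the paper applies the identity directly to the open set $\mathrm{Coz}(h_\beta)$ itself, whereas you introduce the smaller superlevel set $U=|h_\beta|^{-1}\big((c/2,\infty)\big)$, an auxiliary step that is harmless but unnecessary.
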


\begin{proof}
Let $x\in X$. By $(*)$ there is some $h\in H$ with $h(x)\neq0$. Since
\[\mathrm{Coz}(h_\beta)\subseteq\mathrm{cl}_{\beta X}\mathrm{Coz}(h_\beta)=\mathrm{cl}_{\beta X}\big(X\cap \mathrm{Coz}(h_\beta)\big)=\mathrm{cl}_{\beta X}\mathrm{Coz}(h)\]
we have
\[x\in \mathrm{Coz}(h_\beta)\subseteq\mathrm{int}_{\beta X}\mathrm{cl}_{\beta X}\mathrm{Coz}(h)\subseteq\lambda_H X.\]
\end{proof}

A version of the classical Banach--Stone Theorem states that if $X$ and $Y$ are locally compact Hausdorff spaces, the  Banach algebras (or even the rings; see \cite{A}) $C_0(X)$ and $C_0(Y)$ are isometrically isomorphic if and only if the spaces $X$ and $Y$ are homeomorphic (see Theorem 7.1 of \cite{Be}); this will be used in the proof of the following theorem.

\begin{theorem}\label{TRES}
Let $X$ be a completely regular space. Let $H$ be a Banach subalgebra of $C_b(X)$ such that
\begin{itemize}
\item[\rm(1)] For any $x\in X$ there is some $h\in H$ with $h(x)\neq0$.
\item[\rm(2)] For any $f\in C_b(X)$, if $\mathrm{supp}(f)\subseteq\mathrm{supp}(h)$ for some $h\in H$, then $f\in H$.
\end{itemize}
Then $H$ is isometrically isomorphic to $C_0(Y)$ for some unique locally compact Hausdorff space $Y$, namely $Y=\lambda_H X$. Furthermore, the following are equivalent:
\begin{itemize}
\item[\rm(a)] $H$ is unital.
\item[\rm(b)] $H$ contains $\mathbf{1}$.
\item[\rm(c)] $Y$ is compact.
\item[\rm(d)] $Y=\beta X$.
\end{itemize}
\end{theorem}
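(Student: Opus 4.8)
The plan is to realize the isomorphism concretely through Stone--\v{C}ech extension. Write $Y=\lambda_H X$; recall $Y$ is open in the compact Hausdorff space $\beta X$, hence locally compact and Hausdorff, and that condition (1) is precisely hypothesis $(*)$ of Lemma \ref{KHF}, so $X\subseteq Y$, and since $X$ is dense in $\beta X$ it is dense in $Y$. I would define $\Phi:H\to C(Y)$ by $\Phi(h)=h_\beta|_Y$. As Stone--\v{C}ech extension is linear and multiplicative (extensions being unique), $\Phi$ is an algebra homomorphism, and density of $X$ in both $Y$ and $\beta X$ gives $\|\Phi(h)\|=\sup_Y|h_\beta|=\sup_X|h|=\|h\|$, so $\Phi$ is isometric, in particular injective. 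The first substantive point is that $\Phi$ lands in $C_0(Y)$: for $\epsilon>0$ the set $\{|h_\beta|\geq\epsilon\}$ is closed in $\beta X$, hence compact, and it lies inside $\mathrm{Coz}(h_\beta)$, which, being open and contained in $\mathrm{cl}_{\beta X}\mathrm{Coz}(h)$, sits inside $\mathrm{int}_{\beta X}\mathrm{cl}_{\beta X}\mathrm{Coz}(h)\subseteq Y$ (exactly as in the proof of Lemma \ref{KHF}); thus $\{y\in Y:|h_\beta(y)|\geq\epsilon\}$ is compact and $\Phi(h)\in C_0(Y)$.

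The crux is surjectivity, and this is where condition (2) and completeness of $H$ enter. Since $\Phi$ is isometric and $H$ is complete, $\Phi(H)$ is closed in $C_0(Y)$; as $C_{00}(Y)$ is dense in $C_0(Y)$, it suffices to show $C_{00}(Y)\subseteq\Phi(H)$. So I would fix $g\in C_0(Y)$ with compact support $K$. Because $K$ is compact, hence closed in $\beta X$, extending $g$ by $0$ off $Y$ yields $\tilde g\in C(\beta X)$: the open sets $Y$ and $\beta X\setminus K$ cover $\beta X$, on each of which $\tilde g$ is continuous and on whose overlap they agree. Putting $f=\tilde g|_X$ gives $f_\beta=\tilde g$, whence $\Phi(f)=g$. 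It then remains to force $f\in H$ via (2), that is, to find a single $h\in H$ with $\mathrm{supp}(f)\subseteq\mathrm{supp}(h)$.

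This is the step I expect to be the main obstacle, and it is also where a naive combination of generators fails over $\mathbb{C}$: $K$ is covered by finitely many $U_i=\mathrm{int}_{\beta X}\mathrm{cl}_{\beta X}\mathrm{Coz}(h_i)$, but without self-adjointness of $H$ one cannot simply form $\sum|h_i|^2$ to absorb the whole cover into one cozero-set. I would instead localize: using normality of the compact Hausdorff space $\beta X$, choose a partition of unity $\{\psi_i\}$ with $\mathrm{supp}\,\psi_i\subseteq U_i$ and $\sum_i\psi_i\equiv 1$ on $K$, and write $g=\sum_i(g\,\psi_i|_Y)$ on $Y$. Each summand $g_i=g\,\psi_i|_Y$ lies in $C_0(Y)$ with $\mathrm{Coz}(g_i)\subseteq U_i\subseteq\mathrm{cl}_{\beta X}\mathrm{Coz}(h_i)$, so the corresponding $f_i=\widetilde{g_i}|_X$ satisfies $\mathrm{cl}_{\beta X}\mathrm{Coz}(f_i)=\mathrm{cl}_{\beta X}\mathrm{Coz}(\widetilde{g_i})\subseteq\mathrm{cl}_{\beta X}\mathrm{Coz}(h_i)$, using the recalled identity $\mathrm{cl}\,U=\mathrm{cl}(U\cap X)$ for open $U$. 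Intersecting with $X$ yields $\mathrm{supp}(f_i)\subseteq\mathrm{supp}(h_i)$, so $f_i\in H$ by (2). Since $\Phi$ is linear and $\sum_i\Phi(f_i)=\sum_i g_i=g$, we obtain $g=\Phi(\sum_i f_i)\in\Phi(H)$, finishing surjectivity. Hence $\Phi:H\to C_0(\lambda_H X)$ is an isometric isomorphism, and uniqueness of $Y$ up to homeomorphism is immediate from the Banach--Stone theorem quoted above, since any representing $Y'$ gives $C_0(Y)\cong C_0(Y')$ isometrically.

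For the equivalences I would run a short cycle. Trivially (b)$\Rightarrow$(a). An isomorphism preserves unitality, and $C_0(Y)$ is unital exactly when the identity forces the constant $\mathbf{1}$ into $C_0(Y)$, i.e. when $Y$ is compact, giving (a)$\Leftrightarrow$(c). For (c)$\Rightarrow$(d), note $Y$ is open in $\beta X$ and contains the dense set $X$, so $\mathrm{cl}_{\beta X}Y=\beta X$; if $Y$ is compact it is closed, whence $Y=\beta X$, while (d)$\Rightarrow$(c) is trivial. Finally (c)$\Rightarrow$(b): when $Y$ is compact, $\mathbf{1}\in C_0(Y)=\Phi(H)$, so $\mathbf{1}=\Phi(h)=h_\beta|_Y$ for some $h\in H$, and restricting to $X$ gives $h=\mathbf{1}$, i.e. $\mathbf{1}\in H$. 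This closes the loop and completes the proof.
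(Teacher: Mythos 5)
Your proposal is correct, and its central step --- surjectivity --- follows a genuinely different route from the paper's. The paper proves a sharper membership criterion (its first Claim): for \emph{every} $f\in C_b(X)$, $f\in H$ if and only if $f_\beta|\lambda_H X\in C_0(\lambda_H X)$. Given $g\in C_0(\lambda_H X)$ it sets $f=g|X$, covers each level set $|f_H|^{-1}([1/n,\infty))$ by finitely many sets $\mathrm{int}_{\beta X}\mathrm{cl}_{\beta X}\mathrm{Coz}(h_i)$, forms $g_n=|h_1|+\cdots+|h_{k_n}|\in H$, sums the series $\sum_{n=1}^{\infty}2^{-n}g_n/\|g_n\|\in H$ (completeness of $H$ enters here), and applies condition (2) just once, via $\mathrm{Coz}(f)\subseteq\bigcup_{n}\mathrm{supp}(g_n)\subseteq\mathrm{supp}(g)$. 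Note that the device you judged unavailable over $\mathbb{C}$ is in fact available: since $\mathrm{supp}(|h|)=\mathrm{supp}(h)$, condition (2) itself yields $|h|\in H$ for every $h\in H$ --- no self-adjointness is needed --- and this is exactly how the paper absorbs a finite cover into a single cozero-set. Your alternative (closed isometric image plus density of $C_{00}(Y)$ in $C_0(Y)$, zero-extension of a compactly supported $g$ over $\beta X$, and a partition of unity on $\beta X$ splitting $g$ into finitely many pieces $g_i$ with $\mathrm{supp}(f_i)\subseteq\mathrm{supp}(h_i)$, each caught by (2)) is valid in every detail I checked: the gluing of $g$ with $\mathbf{0}$ works because $K$ is closed in $\beta X$ and $g$ vanishes on $Y\backslash K$, and the support comparison via $\mathrm{cl}_{\beta X}\mathrm{Coz}(f_i)=\mathrm{cl}_{\beta X}\mathrm{Coz}(\widetilde{g_i})\subseteq\mathrm{cl}_{\beta X}\mathrm{Coz}(h_i)$ intersected with $X$ is sound. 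What each approach buys: yours applies (2) only finitely many times per approximant, sidesteps moduli entirely, and uses completeness in the softer form ``isometric image of a complete space is closed''; the cost is importing two standard external facts (density of $C_{00}$ in $C_0$, partitions of unity in normal spaces) where the paper's series argument is self-contained, and you forgo the two-sided criterion $f\in H\Leftrightarrow f_H\in C_0(\lambda_H X)$, whose finite-cover computation the paper reuses later (e.g.\ in its remark characterizing when $Y$ is compact by $X=\mathrm{supp}(h)$). On the equivalences, your topological (c)$\Rightarrow$(d) ($Y$ is open and dense in $\beta X$, so compactness makes it closed, hence equal to $\beta X$) is cleaner than the paper's route through the unit (by (1) the unit must be $\mathbf{1}$, so $\mathrm{Coz}(\mathbf{1})=X$ forces $\lambda_H X=\beta X$); just be aware that your step (a)$\Leftrightarrow$(c) silently invokes the standard fact that $C_0(Y)$ is unital exactly when $Y$ is compact, whose proof on the locally compact space $Y$ mirrors the paper's use of (1) on $X$, so nothing is gained in rigor there --- only in arrangement.
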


\begin{proof}
For any $f\in C_b(X)$ denote
\[f_H=f_\beta|\lambda_H X.\]
By Lemma \ref{KHF} we know that $f_H$ extends $f$. Note that $\mathrm{supp}(|f|)=\mathrm{supp}(f)$ for any $f\in C_b(X)$; thus by (2) we have $|f|\in H$ if $f\in H$.

We divide the proof into verification of several claims.

\begin{xclaim}\label{HGA}
\textit{For any $f\in C_b(X)$ the following are equivalent:
\begin{itemize}
\item[\rm(i)] $f\in H$.
\item[\rm(ii)] $f_H\in C_0(\lambda_H X)$.
\end{itemize}}
\end{xclaim}

\noindent {\em Proof of the claim.} (i) {\em  implies} (ii). Note that $\mathrm{Coz}(f_\beta)\subseteq\lambda_H X$, as
\[\mathrm{int}_{\beta X}\mathrm{cl}_{\beta X}\mathrm{Coz}(f)\subseteq\lambda_H X\]
and
\[\mathrm{Coz}(f_\beta)\subseteq\mathrm{int}_{\beta X}\mathrm{cl}_{\beta X}\mathrm{Coz}(f),\]
since
\[\mathrm{Coz}(f_\beta)\subseteq\mathrm{cl}_{\beta X}\mathrm{Coz}(f_\beta)=\mathrm{cl}_{\beta X}\big(X\cap\mathrm{Coz}(f_\beta)\big)=\mathrm{cl}_{\beta X}\mathrm{Coz}(f).\]
If $\epsilon>0$ then
\[|f_H|^{-1}\big([\epsilon,\infty)\big)=|f_\beta|^{-1}\big([\epsilon,\infty)\big)\]
is closed in $\beta X$ and is therefore compact.

(ii) {\em  implies} (i). Let $n$ be a positive integer. Since $|f_H|^{-1}([1/n,\infty))$ is a compact subspace of $\lambda_H X$, we have
\begin{eqnarray*}
|f_H|^{-1}\big([1/n,\infty)\big)&\subseteq&\bigcup_{i=1}^{k_n}\mathrm{int}_{\beta X}\mathrm{cl}_{\beta X}\mathrm{Coz}(h_i)\\&\subseteq&\bigcup_{i=1}^{k_n}\mathrm{cl}_{\beta X}\mathrm{Coz}(h_i)\\&=&\mathrm{cl}_{\beta X}\Big(\bigcup_{i=1}^{k_n}\mathrm{Coz}(h_i)\Big)\\&=&\mathrm{cl}_{\beta X}\Big(\bigcup_{i=1}^{k_n} \mathrm{Coz}\big(|h_i|\big)\Big)=\mathrm{cl}_{\beta X}\mathrm{Coz}\Big(\sum_{i=1}^{k_n} |h_i|\Big)
\end{eqnarray*}
for some $h_1,\ldots,h_{k_n}\in H$. Let
\[g_n=|h_1|+\cdots+|h_{k_n}|\in H.\]
We have
\begin{eqnarray*}
|f|^{-1}\big([1/n,\infty)\big)&=&X\cap|f_H|^{-1}\big([1/n,\infty)\big)\\&\subseteq& X\cap\mathrm{cl}_{\beta X}\mathrm{Coz}(g_n)=\mathrm{cl}_X\mathrm{Coz}(g_n)=\mathrm{supp}(g_n).
\end{eqnarray*}
Let
\[g=\sum_{n=1}^\infty 2^{-n}\frac{g_n}{\|g_n\|}.\]
(We may assume that $g_n\neq\mathbf{0}$ for each positive integer $n$.) Then $g\in H$. Since
\[\mathrm{Coz}(f)=\bigcup_{n=1}^\infty|f|^{-1}\big([1/n,\infty)\big)\subseteq\bigcup_{n=1}^\infty\mathrm{supp}(g_n)\subseteq\mathrm{supp}(g)\]
we have $\mathrm{supp}(f)\subseteq\mathrm{supp}(g)$, which by (2) implies that $f\in H$. This proves the claim.

\begin{xclaim}\label{GHJ}
\textit{Let
\[\psi:H\rightarrow C_0(\lambda_H X)\]
be defined by $\psi(h)=h_H$ for any $h\in H$. Then $\psi$ is an isometric isomorphism.}
\end{xclaim}

\noindent {\em Proof of the claim.} By the first claim the function $\psi$ is well-defined. It is clear that $\psi$ is a homomorphism and that $\psi$ is injective. (Note that $X\subseteq\lambda_H X$ by Lemma \ref{KHF}, and that any two scalar-valued continuous functions on $\lambda_H X$ coincide, provided that they agree on $X$.)
To show that $\psi$ is surjective, let $g\in C_0(\lambda_H X)$. Then $(g|X)_H=g$ and thus $g|X\in H$ by the first claim. Now $\psi(g|X)=g$. Finally, to show that $\psi$ is an isometry, let $h\in H$. Then
\[|h_H|(\lambda_H X)=|h_H|(\mathrm{cl}_{\lambda_H X}X)\subseteq\mathrm{cl}_{\mathbb{R}}\big(|h_H|(X)\big)=\mathrm{cl}_{\mathbb{R}}\big(|h|(X)\big)\subseteq\big[0,\|h\|\big]\]
which yields $\|h_H\|\leq\|h\|$. That $\|h\|\leq\|h_H\|$ is clear, as $h_H$ extends $h$. This proves the claim.

\medskip

The uniqueness part of the theorem follows from the Banach--Stone Theorem. (Note that $\lambda_H X$ is a locally compact Hausdorff space.)

We now verify the final assertion of the theorem. Suppose that $H$ is unital with the unit element $u$. For each $x\in X$ let $h_x\in H$ such that $h_x(x)\neq 0$ ($h_x$ exists by (1)). Then $u(x) h_x(x)=h_x(x)$ which yields $u(x)=1$. That is $u=\mathbf{1}$. Note that $\mathrm{Coz}(u)=X$. Now, by the way $\lambda_H X$ is defined we have $\lambda_H X=\beta X$. For the converse, note that if $Y$ is compact then $C_0(Y)=C_b(Y)$. Thus $H$ is unital, as it is isometrically isomorphic to $C_0(Y)$ and the latter is so.
\end{proof}

\begin{remark}\label{KGKLF}
In Theorem \ref{TRES} the existence of the locally compact space $Y$ may also be deduced from the commutative Gelfand--Naimark Theorem in which $Y$ would be the structure space or the maximal ideal space of $H$. (See \cite{HR}.) The point in Theorem \ref{TRES} is that we construct $Y$ explicitly as a subspace of the Stone--\v{C}ech compactification of $X$. The known construction of $Y$ will then enable us to derive certain of its properties. This will be illustrated in Section \ref{KKCDXF} where we consider special cases of the Banach subalgebra $H$.
\end{remark}

\begin{remark}\label{KGFF}
Condition (2) in Theorem \ref{TRES} can be replaced by the following rather more abstract condition.
\begin{itemize}
\item[\rm(2)$'$] For any $f\in C_b(X)$, if $\mathrm{ann}(h)\subseteq\mathrm{ann}(f)$ for some $h\in H$, then $f\in H$.
\end{itemize}
Here
\[\mathrm{ann}(f)=\big\{g\in C_b(X): fg=\mathbf{0}\big\}\]
for any $f\in C_b(X)$.

To show that (2) (in Theorem \ref{TRES}) implies (2)$'$, let $f\in C_b(X)$ such that $\mathrm{ann}(h)\subseteq\mathrm{ann}(f)$ for some $h\in H$. Suppose to the contrary that $\mathrm{supp}(f)\nsubseteq\mathrm{supp}(h)$. Let $x\in\mathrm{supp}(f)$ with $x\notin\mathrm{supp}(h)$. There exists a continuous $k:X\rightarrow[0,1]$ such that
\[k(x)=1\;\;\;\;\mbox{ and }\;\;\;\;k|\mathrm{supp}(h)=\mathbf{0}.\]
Now $k^{-1}((1/2,1])$ is an open neighborhood of $x$ in $X$. Therefore
\[I=k^{-1}\Big(\Big(\frac{1}{2},1\Big]\Big)\cap\mathrm{Coz}(f)\neq\emptyset.\]
If we let $t\in I$ then $k(t)f(t)\neq0$. But $k\in\mathrm{ann}(h)$, as $kh=\mathbf{0}$ by the way we have defined $k$, and therefore $k\in\mathrm{ann}(f)$, which is a contradiction. Thus $\mathrm{supp}(f)\subseteq\mathrm{supp}(h)$. By (2), this implies that $f\in H$.

Next, we verify that (2)$'$ implies (2). Let $f\in C_b(X)$ such that $\mathrm{supp}(f)\subseteq\mathrm{supp}(h)$ for some $h\in H$. Suppose to the contrary that $\mathrm{ann}(h)\nsubseteq\mathrm{ann}(f)$. Let $g\in\mathrm{ann}(h)$ with $g\notin\mathrm{ann}(f)$. Then $gf\neq\mathbf{0}$. Let $t\in X$ such that $g(t)f(t)\neq0$. Then $r=|g(t)|>0$. Note that $t\in\mathrm{supp}(f)$, as $f(t)\neq0$, and thus $t\in\mathrm{supp}(h)$. Now $|g|^{-1}((r/2,\infty))$ is an open neighborhood of $t$ in $X$. Therefore
\[E=|g|^{-1}\Big(\Big(\frac{r}{2},\infty\Big)\Big)\cap\mathrm{Coz}(h)\neq\emptyset.\]
But if $x\in E$ then $g(x)h(x)\neq 0$, which is a contradiction, as $gh=\mathbf{0}$ by the choice of $g$. Thus $\mathrm{ann}(h)\subseteq\mathrm{ann}(f)$. By (2)$'$, this implies that $f\in H$.
\end{remark}

\begin{remark}\label{KGFF}
In Theorem \ref{TRES} observe that the space $Y$ is compact if and only if $X=\mathrm{supp}(h)$ for some $h\in H$. To show this, suppose that $\lambda_H X$ is compact. Then
\begin{equation}\label{KDJB}
\lambda_H X=\bigcup_{i=1}^{k_n}\mathrm{int}_{\beta X}\mathrm{cl}_{\beta X}\mathrm{Coz}(h_i)\subseteq\bigcup_{i=1}^{k_n}\mathrm{cl}_{\beta X}\mathrm{Coz}(h_i)
\end{equation}
for some $h_1,\ldots,h_{k_n}\in H$. Note that $X\subseteq\lambda_H X$ by Lemma \ref{KHF}. Now, if we intersect both sides of (\ref{KDJB}) with $X$ we obtain
\[X=X\cap\lambda_H X\subseteq\bigcup_{i=1}^{k_n}\big(X\cap\mathrm{cl}_{\beta X}\mathrm{Coz}(h_i)\big)=\bigcup_{i=1}^{k_n}\mathrm{cl}_X\mathrm{Coz}(h_i)=\bigcup_{i=1}^{k_n}\mathrm{cl}_X\mathrm{Coz}\big(|h_i|\big)\]
and thus
\[X=\bigcup_{i=1}^{k_n}\mathrm{cl}_X\mathrm{Coz}\big(|h_i|\big)=\mathrm{cl}_X\Big(\bigcup_{i=1}^{k_n}\mathrm{Coz}\big(|h_i|\big)\Big)=
\mathrm{cl}_X\mathrm{Coz}\Big(\sum_{i=1}^{k_n}|h_i|\Big).\]
That is
\[X=\mathrm{cl}_X\mathrm{Coz}(h)=\mathrm{supp}(h),\]
where
\[h=|h_1|+\cdots+|h_{k_n}|\in H.\]

The converse is trivial, as if $X=\mathrm{supp}(h)$ for some $h\in H$, then $\lambda_H X=\beta X$ by the definition of $\lambda_H X$.
\end{remark}

\begin{remark}
In Theorem \ref{TRES} it is natural to ask what would happen if we start with $H=C_0(X)$ from the outset, with $X$ a locally compact Hausdorff space. Given that $x\in X$, there exists an open neighborhood $U_x$ of $x$ in $X$ with compact closure $\mathrm{cl}_X U_x$. Since $X$ is completely regular, there is a continuous $h_x:X\rightarrow\mathbb{R}$ such that
\[h_x(x)=1\;\;\;\;\mbox{ and }\;\;\;\;h_x|(X\backslash U_x)=\mathbf{0}.\]
Then $\mathrm{supp}(h_x)$ is compact, as $\mathrm{supp}(h_x)\subseteq\mathrm{cl}_X U_x$ (and $\mathrm{cl}_X U_x$ is compact). Thus in particular $h_x\in C_0(X)$. In other words, if $H=C_0(X)$, then condition (1) in Theorem \ref{TRES} holds, though, as we will see now, condition (2) therein may fail.

Let $X=\mathbb{R}$. Define $h:X\rightarrow\mathbb{R}$ by
\[h(x)=\frac{1}{1+x^2}\]
for any $x\in X$. Then obviously $h\in C_0(X)$. Now, if we let $f=\mathbf{1}$ then $\mathrm{supp}(f)\subseteq\mathrm{supp}(h)$ while $f\notin C_0(X)$.

More generally, we show that $C_0(X)$ fails to satisfy condition (2) in Theorem \ref{TRES} for any non-compact locally compact $\sigma$-compact Hausdorff space $X$. Let $X$ be a non-compact locally compact $\sigma$-compact Hausdorff space. Then, there exists a sequence $X_1,X_2,\ldots$ of compact subspaces of $X$ such that
\begin{equation}\label{KHJD}
X=\bigcup_{n=1}^\infty X_n,
\end{equation}
and $X_n\subseteq\mathrm{int}_X X_{n+1}$ for each positive integer $n$. (See Exercise 3.8.C of \cite{E}.) By complete regularity of $X$, for each positive integer $n$ there exists a continuous $h_n:X\rightarrow[0,1]$ such that
\[h_n|X_n=\mathbf{1}\;\;\;\;\mbox{ and }\;\;\;\;h_n|(X\backslash\mathrm{int}_X X_{n+1})=\mathbf{0}.\]
Let
\[h=\sum_{n=1}^\infty\frac{h_n}{2^n}.\]
Then $h:X\rightarrow[0,1]$ is continuous. Fix some positive integer $n$. Let $x\in X\backslash X_{n+2}$. Then $x\notin X_i$ if $1<i\leq n+2$ and thus $h_{i-1}(x)=0$ (by the definition of $h_{i-1}$). Therefore
\[h(x)=\sum_{i=n+2}^\infty\frac{h_i(x)}{2^i}\leq\sum_{i=n+2}^\infty\frac{1}{2^i}=\frac{1}{2^{n+1}}<\frac{1}{2^n}.\]
That is
\[E=h^{-1}\Big(\Big[\frac{1}{2^n},\infty\Big)\Big)\subseteq X_{n+2}.\]
Since $X_{n+2}$ is compact, so is its closed subspace $E$. This shows that $h\in C_0(X)$. Note that $\mathrm{supp}(h)=X$ by (\ref{KHJD}) (and the definitions of $h$ and $h_n$'s). Now, as argued in the case when $X=\mathbb{R}$, it follows that $C_0(X)$ does not satisfy condition (2) in Theorem \ref{TRES}.

As a concluding remark, observe that if $C_0(X)$ satisfies the assumption of Theorem \ref{TRES} where $X$ is a locally compact Hausdorff space, then $C_0(X)$ is isometrically isomorphic to $C_0(\lambda_H X)$ by the theorem. Now (a version of) the Banach--Stone Theorem implies that $X$ and $\lambda_H X$ are homeomorphic spaces.
\end{remark}

\section{Examples}\label{KKCDXF}

In this section we give examples of spaces $X$ and Banach subalgebras $H$ of $C_b(X)$ for which Theorem \ref{TRES} is applicable.

Recall that a topological property $\mathscr{P}$ is called {\em hereditary with respect to closed subspaces}, if each closed subspace of a space with $\mathscr{P}$ has $\mathscr{P}$. We will always assume that a topological property is {\em non-empty}, that is, for a given topological property $\mathscr{P}$ there always exists a space with $\mathscr{P}$. This in particular implies that $\emptyset$ has $\mathscr{P}$ for any topological property $\mathscr{P}$ which is hereditary with respect to closed subspaces.

\begin{theorem}\label{CDXF}
Let $\mathscr{Q}$ be a topological property hereditary with respect to closed subspaces. Let $\mathscr{P}$ be a topological property such that
\begin{itemize}
\item[\rm(1)] $\mathscr{P}$ is hereditary with respect to closed subspaces of spaces with $\mathscr{Q}$.
\item[\rm(2)] Any space with $\mathscr{Q}$ containing a dense subspace with $\mathscr{P}$ has $\mathscr{P}$.
\item[\rm(3)] Any space which is a countable union of its closed subspaces each with $\mathscr{P}$ has $\mathscr{P}$.
\end{itemize}
Let $X$ be a completely regular locally-$\mathscr{P}$ space with $\mathscr{Q}$. Let
\[H=\big\{f\in C_b(X):\mathrm{supp}(f)\mbox{ has }\mathscr{P}\big\}.\]
Then $H$ is a Banach algebra isometrically isomorphic to $C_0(Y)$ for some unique locally compact Hausdorff space $Y$, namely $Y=\lambda_H X$. Furthermore, the following are equivalent:
\begin{itemize}
\item[\rm(a)] $X$ has $\mathscr{P}$.
\item[\rm(b)] $H$ is unital.
\item[\rm(c)] $H$ contains $\mathbf{1}$.
\item[\rm(d)] $Y$ is compact.
\item[\rm(e)] $Y=\beta X$.
\end{itemize}
\end{theorem}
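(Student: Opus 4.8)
The plan is to deduce Theorem \ref{CDXF} from Theorem \ref{TRES} by verifying that the set $H=\{f\in C_b(X):\mathrm{supp}(f)\text{ has }\mathscr{P}\}$ is a Banach subalgebra satisfying hypotheses (1) and (2) of Theorem \ref{TRES}. First I would check that $H$ is a closed subalgebra of $C_b(X)$. Closure under the algebra operations reduces to showing that for $f,g\in H$ one has $\mathrm{supp}(f+g),\ \mathrm{supp}(fg)$ and $\mathrm{supp}(\alpha f)$ all possessing $\mathscr{P}$; the key containment is $\mathrm{Coz}(f+g)\subseteq\mathrm{Coz}(f)\cup\mathrm{Coz}(g)$, giving $\mathrm{supp}(f+g)\subseteq\mathrm{supp}(f)\cup\mathrm{supp}(g)$, so that $\mathrm{supp}(f+g)$ is a closed subspace of the union of two sets with $\mathscr{P}$. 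Using property (3) (a finite, hence countable, union of closed subspaces with $\mathscr{P}$ has $\mathscr{P}$) together with property (1) (heredity of $\mathscr{P}$ with respect to closed subspaces of spaces with $\mathscr{Q}$, the ambient $X$ having $\mathscr{Q}$) yields $f+g\in H$; the product and scalar cases are easier since $\mathrm{supp}(fg)\subseteq\mathrm{supp}(f)$.

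The norm-completeness of $H$ is the step I expect to demand the most care, and it is where property (2) enters. Given a sequence $f_n\in H$ converging uniformly to some $f\in C_b(X)$, I must show $\mathrm{supp}(f)$ has $\mathscr{P}$. The natural approach mirrors the argument in Claim \ref{HGA} of Theorem \ref{TRES}: write $\mathrm{Coz}(f)=\bigcup_{k\ge1}|f|^{-1}([1/k,\infty))$ and observe that once $\|f_n-f\|<1/(2k)$ we get $|f|^{-1}([1/k,\infty))\subseteq|f_n|^{-1}([1/(2k),\infty))\subseteq\mathrm{supp}(f_n)$, so each set $|f|^{-1}([1/k,\infty))$ lies in some $\mathrm{supp}(f_{n_k})$, which has $\mathscr{P}$. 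Taking closures and invoking property (1), the closed subspace $\mathrm{cl}_X|f|^{-1}([1/k,\infty))$ of $\mathrm{supp}(f_{n_k})$ has $\mathscr{P}$; then $\mathrm{supp}(f)=\mathrm{cl}_X\mathrm{Coz}(f)=\mathrm{cl}_X\bigcup_k|f|^{-1}([1/k,\infty))$ is a countable union of these closed subspaces, and property (3) delivers $\mathscr{P}$ for $\mathrm{supp}(f)$. Hence $f\in H$ and $H$ is complete.

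Next I would verify the two hypotheses of Theorem \ref{TRES}. Condition (1) there is precisely the statement that each $x\in X$ is in the cozero-set of some $h\in H$; this follows from $X$ being locally-$\mathscr{P}$: pick an open $U\ni x$ with $\mathrm{cl}_X U$ having $\mathscr{P}$, use complete regularity to produce a continuous $h:X\to[0,1]$ with $h(x)=1$ and $h|(X\setminus U)=\mathbf 0$, so that $\mathrm{supp}(h)\subseteq\mathrm{cl}_X U$ is a closed subspace of a $\mathscr{P}$-space within the $\mathscr{Q}$-space $X$, hence has $\mathscr{P}$ by (1), giving $h\in H$ with $h(x)\neq0$. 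Condition (2) of Theorem \ref{TRES} is immediate from the definition of $H$: if $\mathrm{supp}(f)\subseteq\mathrm{supp}(h)$ for some $h\in H$, then $\mathrm{supp}(f)$ is a closed subspace of $\mathrm{supp}(h)$, and since $\mathrm{supp}(h)$ has $\mathscr{P}$ inside the $\mathscr{Q}$-space $X$, property (1) gives that $\mathrm{supp}(f)$ has $\mathscr{P}$, so $f\in H$. Theorem \ref{TRES} then yields the isometric isomorphism $H\cong C_0(Y)$ with $Y=\lambda_H X$ unique, together with the equivalence of (b), (c), (d), (e).

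Finally I would close the loop by proving (a)$\Leftrightarrow$(c), which suffices since Theorem \ref{TRES} already links (b)--(e). If $X$ has $\mathscr{P}$, then $\mathrm{supp}(\mathbf 1)=X$ has $\mathscr{P}$, so $\mathbf 1\in H$, giving (c). Conversely, if $\mathbf 1\in H$ then $\mathrm{supp}(\mathbf 1)=X$ has $\mathscr{P}$, which is (a). Where property (2) of the theorem is used is slightly subtle here — it is in the completeness argument and in identifying $H$ with a genuine Banach algebra — but it plays no role in this last equivalence. I anticipate the only genuinely delicate point to be the completeness proof, specifically the interplay of the three hypotheses on $\mathscr{P}$ when passing from finitely many sublevel sets to the countable union forming $\mathrm{supp}(f)$; the rest is a matter of careful but routine bookkeeping with supports, cozero-sets, and the hereditary and union properties of $\mathscr{P}$.
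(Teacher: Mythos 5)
Your overall strategy coincides with the paper's: show $H$ is a closed subalgebra of $C_b(X)$ satisfying conditions (1) and (2) of Theorem \ref{TRES}, then invoke that theorem. However, there is a genuine flaw in your completeness argument. You assert that $\mathrm{supp}(f)=\mathrm{cl}_X\bigcup_k|f|^{-1}([1/k,\infty))$ ``is a countable union of these closed subspaces, and property (3) delivers $\mathscr{P}$.'' That identification is false: the union $\bigcup_k|f|^{-1}([1/k,\infty))$ equals $\mathrm{Coz}(f)$, which is in general a \emph{proper dense} subset of $\mathrm{supp}(f)$ (take $f(x)=x$ on $X=[0,1]$: the level sets are $[1/k,1]$, their union is $(0,1]$, while $\mathrm{supp}(f)=[0,1]$). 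So hypothesis (3) only yields that $\mathrm{Coz}(f)$ has $\mathscr{P}$ --- each level set being closed in $X$, hence closed in $\mathrm{Coz}(f)$ --- and it cannot by itself reach the closure. The missing step is exactly hypothesis (2) of the theorem, which you announced would enter here but never actually deployed: $\mathrm{supp}(f)$ is closed in $X$, hence has $\mathscr{Q}$, and it contains $\mathrm{Coz}(f)$ as a dense subspace with $\mathscr{P}$, so (2) gives that $\mathrm{supp}(f)$ has $\mathscr{P}$. With this one-line repair your completeness proof is correct, and it then differs only cosmetically from the paper's, which applies (3) to $\bigcup_n\mathrm{supp}(h_n)$, (2) to its closure $E=\mathrm{cl}_X\big(\bigcup_n\mathrm{supp}(h_n)\big)$, and (1) to $\mathrm{supp}(f)$ as a closed subspace of $E$, using the containment $\mathrm{Coz}(f)\subseteq\bigcup_n\mathrm{Coz}(h_n)$, which is immediate from pointwise convergence.

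Two smaller points. Each time you apply hypothesis (1) you must check that the \emph{ambient} space has $\mathscr{Q}$, not merely that $X$ does: for $f+g$ the ambient space is $\mathrm{supp}(f)\cup\mathrm{supp}(g)$, which has $\mathscr{Q}$ because it is closed in $X$ and $\mathscr{Q}$ is hereditary with respect to closed subspaces; the same remark applies to $\mathrm{cl}_XU$ and to $\mathrm{supp}(h)$ in your verifications of conditions (1) and (2) of Theorem \ref{TRES}. The paper records this at each step, and your parenthetical ``the ambient $X$ having $\mathscr{Q}$'' elides it. Finally, your closing equivalence (a)$\Leftrightarrow$(c) via $\mathrm{supp}(\mathbf{1})=X$ is correct and is in fact more direct than the paper's route, which proves (a)$\Rightarrow$(c) trivially but establishes (d)$\Rightarrow$(a) by citing Remark \ref{KGFF} ($Y$ compact forces $X=\mathrm{supp}(h)$ for some $h\in H$); either closure of the cycle is legitimate since Theorem \ref{TRES} already links (b)--(e).
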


\begin{proof}
We verify that $H$ satisfies the assumption of Theorem \ref{TRES}. First, we need to show that $H$ is a subalgebra of $C_b(X)$. Note that $\mathbf{0}\in H$, as $\emptyset$ has $\mathscr{P}$ by (1). Let $f,g\in H$. Then $\mathrm{supp}(f)\cup\mathrm{supp}(g)$ has $\mathscr{Q}$, as it is closed in $X$, and has $\mathscr{P}$, as it is the union of two of its closed subspaces each with $\mathscr{P}$; see (3). Since
\[\mathrm{supp}(f+g)\subseteq\mathrm{supp}(f)\cup\mathrm{supp}(g)\]
it then follows that $\mathrm{supp}(f+g)$ has $\mathscr{P}$, by (1). That is $f+g\in H$. Analogously, $fg\in H$ and $cf\in H$ for any scalar $c$. (Note that $\mathrm{supp}(fg)\subseteq\mathrm{supp}(f)$.)

Next, we show that $H$ is closed in $C_b(X)$. Let $h_1,h_2,\ldots$ be a sequence in $H$ converging to some $f\in C_b(X)$. Note that
\[E=\mathrm{cl}_X\Big(\bigcup_{n=1}^\infty\mathrm{supp}(h_n)\Big)\]
has $\mathscr{Q}$, as it is closed in $X$, and thus by (2) has $\mathscr{P}$, as it contains $\bigcup_{n=1}^\infty\mathrm{supp}(h_n)$ as a dense subspace, and by (3) the latter has $\mathscr{P}$, as it is a countable union of its closed subspaces each with $\mathscr{P}$. Note that
\[\mathrm{Coz}(f)\subseteq\bigcup_{n=1}^\infty\mathrm{Coz}(h_n).\]
Therefore $\mathrm{supp}(f)$ has $\mathscr{P}$ by (1), as it is closed in $E$. That is $f\in H$. This shows that $H$ is a Banach subalgebra of $C_b(X)$.

Next, we verify that $H$ satisfies conditions (1) and (2) of Theorem \ref{TRES}.

To show that $H$ satisfies condition (1) of Theorem \ref{TRES}, let $x\in X$. Let $U$ be an open neighborhood of $x$ in $X$ such that $\mathrm{cl}_XU$ has $\mathscr{P}$. Note that $\mathrm{cl}_XU$ has also $\mathscr{Q}$, as it is closed in $X$. Let $f:X\rightarrow[0,1]$ be continuous with
\[f(x)=1\;\;\;\;\mbox{ and }\;\;\;\;f|(X\backslash U)=\mathbf{0}.\]
Then $\mathrm{supp}(f)$ has $\mathscr{P}$ by (1), as it is closed in $\mathrm{cl}_XU$. Therefore $f\in H$.

That $H$ satisfies condition (2) in Theorem \ref{TRES} is clear, as if $f\in C_b(X)$ with $\mathrm{supp}(f)\subseteq\mathrm{supp}(h)$ for some $h\in H$, then $\mathrm{supp}(h)$ has $\mathscr{P}$ (and also $\mathscr{Q}$, as it is closed in $X$) and thus, by (1) again, so does its closed subspace $\mathrm{supp}(f)$. Therefore $f\in H$.

For the final assertion of the theorem, note that (b), (c), (d) and (e) are equivalent by Theorem \ref{TRES}. That (a) implies (c) is trivial. To show that (d) implies (a), note that if $Y$ is compact then $X$ has $\mathscr{P}$, as $X=\mathrm{supp}(h)$ for some $h\in H$ (see Remark \ref{KGFF}).
\end{proof}

\begin{remark}
Observe that in the proof of Theorem \ref{CDXF} we actually proved that $H$ is a closed subalgebra of $C_b(X)$.
\end{remark}

\begin{remark}
The set
\[C_\mathscr{P}(X)=\big\{f\in C_b(X):\mathrm{supp}(f)\mbox{ has }\mathscr{P}\big\}\]
where $X$ is a space and $\mathscr{P}$ is a topological property, has been also considered in \cite{AG} and \cite{T}. (See also \cite{AN}.) In \cite{AG} (Theorem 2.2) conditions are given which are necessary and sufficient for $C_\mathscr{P}(X)$ to be a Banach space. The approach in \cite{T} is quite algebraic.
\end{remark}

The next two theorems are to provide examples of topological properties $\mathscr{P}$ and $\mathscr{Q}$ satisfying the assumption of Theorem \ref{CDXF}. Specifically, in Theorem \ref{CTTF} we let $\mathscr{P}$ and $\mathscr{Q}$ be, respectively, separability and metrizability, and in Theorem \ref{UUP} we let $\mathscr{P}$ and $\mathscr{Q}$ be, respectively, the Lindel\"{o}f property and paracompactness. Among other things, we will show that in these cases (with the notation of Theorem \ref{CDXF}) $Y$ is countably compact, and is non-normal if $X$ is non-$\mathscr{P}$.

Theorem \ref{CTTF} is known (see \cite{Ko6}); we include it here (along with its proof) partly to demonstrate an application of Theorem \ref{CDXF}, and partly because of the existing duality between Theorem \ref{CTTF} and Theorem \ref{UUP}. Besides, the proof in Theorem \ref{CTTF} illustrates how our methods may be used to extract results not generally expected to be deducible from the standard Gelfand Theory. Let us recall the following.

A space $X$ is called \emph{countably compact} if every countable open cover of $X$ has a finite subcover, or equivalently, if every countable infinite subspace of $X$ has a limit point in $X$. It is well known that if $X$ is a locally compact Hausdorff space then $C_0(X)=C_{00}(X)$ if and only if every $\sigma$-compact subspace of $X$ is contained in a compact subspace of $X$ (see Problem 7G.2 of \cite{GJ}); in particular, $C_0(X)=C_{00}(X)$ implies that $X$ is countably compact.

Let $D$ be an uncountable discrete space. Denote by $D_\lambda$ the subspace of $\beta D$ consisting of elements in the closure in $\beta D$ of countable subsets of $D$. In \cite{W}, the author proves the existence of a continuous (2-valued) function $f:D_\lambda\backslash D\rightarrow[0,1]$ which is not continuously extendible to $\beta D\backslash D$. This, in particular, proves that $D_\lambda$ is not normal. (To see this, suppose the contrary. Note that $D_\lambda\backslash D$ is closed in $D_\lambda$, as $D$ is locally compact and then open in $\beta D$. By the Tietze--Urysohn Extension Theorem, $f$ is extendible to a continuous bounded function over $D_\lambda$, and therefore over $\beta D_\lambda=\beta D$; note that $D\subseteq D_\lambda$. But this is not possible.)

A theorem of Tarski guarantees for an infinite set $I$ the existence of a collection $\mathscr{I}$ of cardinality $|I|^{\aleph_0}$ consisting of countable infinite subsets of $I$ such that the intersection of any two distinct elements of $\mathscr{I}$ is finite. (See \cite{Ho}.) Note that the collection of all subsets of cardinality at most $\mathfrak{m}$ in a set of cardinality $\mathfrak{n}\geq\mathfrak{m}$ is of cardinality at most $\mathfrak{n}^\mathfrak{m}$.

Observe that if $X$ is a space and $D\subseteq X$, then
\[U\cap\mathrm{cl}_XD=\mathrm{cl}_X(U\cap D)\]
for any open-closed subspace $U$ of $X$.

\begin{theorem}\label{CTTF}
Let $X$ be a locally separable metrizable space. Then
\[C_s(X)=\big\{f\in C_b(X):\mathrm{supp}(f)\mbox{ is separable}\big\}\]
is a Banach algebra isometrically isomorphic to $C_0(Y)$ for some unique locally compact Hausdorff space $Y$, namely $Y=\lambda_{C_s(X)} X$. Furthermore,
\begin{itemize}
\item[\rm(1)] $Y$ is countably compact.
\item[\rm(2)] $Y$ is non-normal, if $X$ is non-separable.
\item[\rm(3)] $C_0(Y)=C_{00}(Y)$.
\item[\rm(4)] $\dim C_s(X)=d(X)^{\aleph_0}$.
\item[\rm(5)] The following are equivalent:
\begin{itemize}
\item[\rm(a)] $X$ is separable.
\item[\rm(b)] $C_s(X)$ is unital.
\item[\rm(c)] $C_s(X)$ contains $\mathbf{1}$.
\item[\rm(d)] $Y$ is compact.
\item[\rm(e)] $Y=\beta X$.
\end{itemize}
\end{itemize}
\end{theorem}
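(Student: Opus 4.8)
The plan is to obtain the representation and the equivalences in (5) as an instance of Theorem \ref{CDXF}, and then to read off (1)--(4) from an explicit description of $Y$. For the first step I would take $\mathscr{P}$ to be separability and $\mathscr{Q}$ to be metrizability. Metrizability is hereditary with respect to closed subspaces, and the three conditions on $\mathscr{P}$ hold in the metrizable setting: a closed subspace of a separable metrizable space is separable (since there separability, the Lindel\"of property and second countability coincide), a metrizable space with a dense separable subspace is separable, and a countable union of separable closed subspaces is separable. As $X$ is completely regular, locally separable (locally-$\mathscr{P}$) and metrizable (has $\mathscr{Q}$), Theorem \ref{CDXF} applies and yields the isometric isomorphism of $C_s(X)$ with $C_0(Y)$ for $Y=\lambda_{C_s(X)}X$, together with the equivalences in (5).

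The engine for (1)--(4) is the following description of $Y$. Write $X=\bigcup_{i\in I}X_i$ by Alexandroff's theorem, with each $X_i$ a non-empty separable open-closed subspace, and for $J\subseteq I$ put $X_J=\bigcup_{i\in J}X_i$. Since a separable metrizable space is hereditarily separable it contains no uncountable discrete subspace; hence the support of any $h\in C_s(X)$ meets only countably many of the pairwise disjoint open sets $X_i$, so $\mathrm{Coz}(h)\subseteq X_J$ for some countable $J$. As $X_J$ is open-closed in $X$, its closure $\mathrm{cl}_{\beta X}X_J$ is open-closed (hence compact) in $\beta X$, and $\mathrm{int}_{\beta X}\mathrm{cl}_{\beta X}\mathrm{Coz}(h)\subseteq\mathrm{cl}_{\beta X}X_J$. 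Conversely, for countable $J$ the characteristic function $\chi_{X_J}$ lies in $C_s(X)$ and satisfies $\mathrm{cl}_{\beta X}X_J=\mathrm{int}_{\beta X}\mathrm{cl}_{\beta X}\mathrm{Coz}(\chi_{X_J})\subseteq Y$. This gives
\[Y=\bigcup\big\{\mathrm{cl}_{\beta X}X_J:J\subseteq I\mbox{ countable}\big\},\]
an open cover of $Y$ by compact open-closed sets indexed by the countable subsets of $I$.

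With this description, (3) is immediate: a $\sigma$-compact $A\subseteq Y$ is covered, using compactness of its pieces, by finitely many $\mathrm{cl}_{\beta X}X_J$ at each stage, hence $A\subseteq\mathrm{cl}_{\beta X}X_L$ for a single countable $L$, and $\mathrm{cl}_{\beta X}X_L$ is a compact subspace of $Y$; thus $C_0(Y)=C_{00}(Y)$ by the criterion recalled before the theorem, and (1) follows since $C_0(Y)=C_{00}(Y)$ forces $Y$ to be countably compact. For (2), assume $X$ non-separable, so $I$ is uncountable; choosing $x_i\in X_i$ produces a closed discrete subspace $D=\{x_i:i\in I\}$ of $X$ of cardinality $|I|$, whence $\mathrm{cl}_{\beta X}D=\beta D$ by normality of $X$. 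Applying the identity $U\cap\mathrm{cl}\,D=\mathrm{cl}(U\cap D)$ (valid for open-closed $U$) in $\beta X$ with $U=\mathrm{cl}_{\beta X}X_J$ gives $\mathrm{cl}_{\beta X}X_J\cap\mathrm{cl}_{\beta X}D=\mathrm{cl}_{\beta X}(X_J\cap D)$, and intersecting the displayed description of $Y$ with $\mathrm{cl}_{\beta X}D$ then identifies $Y\cap\mathrm{cl}_{\beta X}D$ with $D_\lambda$. Since $\mathrm{cl}_{\beta X}D$ is closed in $\beta X$, this copy of $D_\lambda$ is closed in $Y$; as $D_\lambda$ is non-normal (Watson) and closed subspaces of normal spaces are normal, $Y$ is non-normal.

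For (4), assume $X$ infinite. The upper bound $\dim C_s(X)\le d(X)^{\aleph_0}$ comes from counting: each $f\in C_s(X)$ is supported in some $X_J$ with $J$ countable, and a continuous function on the separable space $X_J$ is determined by its values on a countable dense set, so $|C_s(X)|\le d(X)^{\aleph_0}\cdot\mathfrak{c}=d(X)^{\aleph_0}$. For the matching lower bound I would invoke Tarski's theorem to fix an almost disjoint family $\mathscr{I}$ of countably infinite subsets of $I$ with $|\mathscr{I}|=|I|^{\aleph_0}=d(X)^{\aleph_0}$; choosing bump functions $b_i$ with $b_i(x_i)=1$ and $\mathrm{supp}(b_i)\subseteq X_i$, the functions $f_A=\sum_{i\in A}b_i$ lie in $C_s(X)$, and almost-disjointness makes them linearly independent (given a finite dependence, some member $A_0$ contains an index lying outside every other member of the family, and evaluation at the corresponding $x_i$ isolates its coefficient). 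The main obstacle I anticipate is the structural description of $Y$ in the second paragraph, and in particular the verification that the copy of $D_\lambda$ is \emph{closed} in $Y$ for (2); the remaining parts are then routine, modulo treating the separable case of (4) (where the decomposition may be trivial) by the same almost-disjoint-family device applied to a countable set of points of $X$.
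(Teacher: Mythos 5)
Your proposal is correct and takes essentially the same route as the paper: it instantiates Theorem \ref{CDXF} with separability and metrizability, establishes the key description $Y=\bigcup\{\mathrm{cl}_{\beta X}H_J:J\subseteq I\mbox{ countable}\}$ via the Alexandroff decomposition, proves (3) and hence (1) exactly as the paper does, gets (2) from the closed copy of $D_\lambda$ inside $Y$ together with Warren's non-normality result, and proves (4) with a Tarski almost disjoint family (your $f_A=\sum_{i\in A}b_i$ is a cosmetic variant of the paper's characteristic functions $\chi_{H_J}$, and your hereditary-separability argument for ``$\mathrm{supp}(h)$ meets only countably many $X_i$'' replaces the paper's equivalent Lindel\"{o}f argument). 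One small caution on your parenthetical treatment of the separable case of (4) --- a case the paper's own proof silently skips: if the chosen countable set of points has a convergent subsequence, the unweighted sums $\sum_{i\in A}b_i$ need not be continuous at the limit point, so one should use weighted sums $\sum_{i\in A}2^{-i}b_i$ (uniform convergence restores continuity, and evaluation at the $x_i$ still yields linear independence).
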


\begin{proof}
Let $\mathscr{P}$ be separability and $\mathscr{Q}$ be metrizability. Then the pair $\mathscr{P}$ and $\mathscr{Q}$ satisfies the assumption of Theorem \ref{CDXF}. (Observe that any subspace of a separable metrizable space is separable.) As remarked in Part 1.3 the space $X$ may be represented as a disjoint union
\[X=\bigcup_{i\in I}X_i,\]
where $I$ is an index set and $X_i$ is a non-empty separable open-closed subspace of $X$ for each $i\in I$. To simplify the notation, for any $J\subseteq I$ denote
\[H_J=\bigcup_{i\in J}X_i.\]
Observe that $H_J$ is open-closed in $X$, thus it has open-closed closure in $\beta X$. Also, as we will see now,
\begin{equation}\label{JHGK}
\lambda_{C_s(X)} X=\bigcup\{\mathrm{cl}_{\beta X}H_J:J\subseteq I\mbox{ is countable}\}.
\end{equation}
To show (\ref{JHGK}), let $f\in C_s(X)$. Then $\mathrm{supp}(f)$ is separable and is then Lindel\"{o}f. Therefore $\mathrm{supp}(f)\subseteq H_J$ for some countable $J\subseteq I$. Thus
\[\mathrm{cl}_{\beta X}\mathrm{Coz}(f)\subseteq\mathrm{cl}_{\beta X}H_J.\]
Now, let $J\subseteq I$ be countable. Let
\[g=\chi_{H_J}.\]
Then $g$ is continuous, as $H_J$ is open-closed in $X$, and $\mathrm{supp}(g)=H_J$ is separable. Since $\mathrm{cl}_{\beta X}H_J$ is open in $\beta X$ we have
\[\mathrm{cl}_{\beta X}H_J=\mathrm{int}_{\beta X}\mathrm{cl}_{\beta X}H_J=\mathrm{int}_{\beta X}\mathrm{cl}_{\beta X}\mathrm{Coz}(g)\subseteq\lambda_{C_s(X)} X.\]

Note that (5) follows from Theorem \ref{CDXF} and that (3) implies (1); as if (3) holds, then every countable infinite subspace of $Y$ (being $\sigma$-compact) is contained in a compact subspace of $Y$, and therefore has a limit point in $Y$. (See the remarks preceding the statement of the theorem.)

(3). We need to show that every $\sigma$-compact subspace of $\lambda_{C_s(X)} X$ is contained in a compact subspace of $\lambda_{C_s(X)} X$. Let $A$ be a $\sigma$-compact subspace of $\lambda_{C_s(X)} X$. Then
\[A=\bigcup_{n=1}^\infty A_n,\]
where $A_n$ is compact for each positive integer $n$. By compactness and the representation given in (\ref{JHGK}) we have
\begin{equation}\label{DJB}
A_n\subseteq\mathrm{cl}_{\beta X}H_{J_1}\cup\cdots\cup\mathrm{cl}_{\beta X}H_{J_{k_n}}
\end{equation}
for some countable $J_1,\ldots,J_{k_n}\subseteq I$. Let
\begin{equation}\label{FFB}
J=\bigcup_{n=1}^\infty(J_{k_1}\cup\cdots\cup J_{k_n}).
\end{equation}
Then $J$ is countable and $A\subseteq\mathrm{cl}_{\beta X}H_J$.

(2). Let $x_i\in X_i$ for each $i\in I$. Then
\[D=\{x_i:i\in I\}\]
is a closed discrete subspace of $X$, and since $X$ is non-separable, it is uncountable. Suppose to the contrary that $\lambda_{C_s(X)} X$ is normal. Then
\[\lambda_{C_s(X)} X\cap\mathrm{cl}_{\beta X}D=\bigcup\{\mathrm{cl}_{\beta X}H_J\cap\mathrm{cl}_{\beta X}D:J\subseteq I\mbox{ is countable}\}\]
is normal, as it is closed in $\lambda_{C_s(X)} X$. Now, let $J\subseteq I$ be countable. Since $\mathrm{cl}_{\beta X}H_J$ is open-closed in $\beta X$ (using the observation preceding the statement of the theorem) we have
\[\mathrm{cl}_{\beta X}H_J\cap\mathrm{cl}_{\beta X}D=\mathrm{cl}_{\beta X}(\mathrm{cl}_{\beta X}H_J\cap D)=\mathrm{cl}_{\beta X}(H_J\cap D)=\mathrm{cl}_{\beta X}\big(\{x_i:i\in J\}\big).\]
But $\mathrm{cl}_{\beta X}D=\beta D$, as $D$ is closed in (the normal space) $X$. Therefore
\[\mathrm{cl}_{\beta X}\big(\{x_i:i\in J\}\big)=\mathrm{cl}_{\beta X}\big(\{x_i:i\in J\}\big)\cap\mathrm{cl}_{\beta X}D=\mathrm{cl}_{\beta D}\big(\{x_i:i\in J\}\big).\]
Thus
\[\lambda_{C_s(X)} X\cap\mathrm{cl}_{\beta X}D=\bigcup\{\mathrm{cl}_{\beta D}E:E\subseteq D\mbox{ is countable}\}=D_\lambda,\]
contradicting the fact that $D_\lambda$ is not normal.

(4). Since $X$ is non-separable, $I$ is infinite and $d(X)=|I|$. The Tarski Theorem implies the existence of a collection $\mathscr{I}$ of cardinality $|I|^{\aleph_0}$ consisting of countable infinite subsets of $I$, such that the intersection of any two distinct elements of $\mathscr{I}$ is finite. Let $f_J=\chi_{H_J}$ for any $J\in\mathscr{I}$. No element in
\[\mathscr{F}=\{f_J:J\in\mathscr{I}\}\]
is a linear combination of other elements (as each element of $\mathscr{I}$ is infinite and each pair of distinct elements of $\mathscr{I}$ has finite intersection). Observe that $\mathscr{F}$ is of cardinality $|\mathscr{I}|$. Thus
\[\dim C_s(X)\geq|\mathscr{I}|=|I|^{\aleph_0}=d(X)^{\aleph_0}.\]
On the other hand, if $f\in C_s(X)$, then $\mathrm{supp}(f)$ is Lindel\"{o}f (as it is separable) and thus $\mathrm{supp}(f)\subseteq H_J$, where $J\subseteq I$ is countable; therefore, we may assume that $f\in C_b(H_J)$. Conversely, if $J\subseteq I$ is countable, then each element of $C_b(H_J)$ can be extended trivially to an element of $C_s(X)$ (by defining it to be identically $0$ elsewhere). Thus $C_s(X)$ may be viewed as the union of all $C_b(H_J)$, where $J$ runs over all countable subsets of $I$. Note that if $J\subseteq I$ is countable, then $H_J$ is separable; thus any element of $C_b(H_J)$ is determined by its value on a countable set. This implies that for each countable $J\subseteq I$, the set $C_b(H_J)$ is of cardinality at most $\mathfrak{c}^{\aleph_0}=2^{\aleph_0}$. There are at most $|I|^{\aleph_0}$ countable $J\subseteq I$. Therefore
\begin{eqnarray*}
\dim C_s(X)\leq\big|C_s(X)\big|&\leq& \Big|\bigcup\big\{C_b(H_J):J\subseteq I\mbox{ is countable}\big\}\Big|\\&\leq& 2^{\aleph_0}\cdot|I|^{\aleph_0}=|I|^{\aleph_0}=d(X)^{\aleph_0}.
\end{eqnarray*}
\end{proof}

\begin{theorem}\label{UUP}
Let $X$ be a locally Lindel\"{o}f paracompact space. Then
\[C_l(X)=\big\{f\in C_b(X):\mathrm{supp}(f)\mbox{ is Lindel\"{o}f}\big\}\]
is a Banach algebra isometrically isomorphic to $C_0(Y)$ for some unique locally compact Hausdorff space $Y$, namely $Y=\lambda_{C_l(X)} X$. Furthermore, the following are equivalent:
\begin{itemize}
\item[\rm(a)] $X$ is Lindel\"{o}f.
\item[\rm(b)] $C_l(X)$ is unital.
\item[\rm(c)] $C_l(X)$ contains $\mathbf{1}$.
\item[\rm(d)] $Y$ is compact.
\item[\rm(e)] $Y=\beta X$.
\end{itemize}
If $X$ is moreover locally compact then
\begin{itemize}
\item[\rm(1)] $Y$ is countably compact.
\item[\rm(2)] $Y$ is non-normal, if $X$ is non-Lindel\"{o}f.
\item[\rm(3)] $C_0(Y)=C_{00}(Y)$.
\item[\rm(4)] $\dim C_l(X)=\ell(X)^{\aleph_0}$.
\end{itemize}
\end{theorem}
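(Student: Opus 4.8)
The plan is to deduce the representation statement and the equivalences (a)--(e) from Theorem~\ref{CDXF}, to obtain the identity describing $\lambda_{C_l(X)}X$ in the locally compact case exactly as in Theorem~\ref{CTTF}, and then to mine that identity for (1)--(3), leaving the cardinality computation (4) as the delicate point.

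First I would take $\mathscr{P}$ to be the Lindel\"of property and $\mathscr{Q}$ to be paracompactness and verify the hypotheses of Theorem~\ref{CDXF}. Both properties are inherited by closed subspaces, which handles the closed-hereditariness requirements and condition~(1); condition~(3) is the standard fact that a countable union of Lindel\"of subspaces is Lindel\"of. The only point needing an argument is condition~(2), that a paracompact space with a dense Lindel\"of subspace $D$ is Lindel\"of: given an open cover, pass to a locally finite open refinement $\mathscr{V}$, cover $D$ by neighbourhoods each meeting only finitely many members of $\mathscr{V}$, and extract a countable subcover of $D$; since every member of $\mathscr{V}$ is nonempty it meets $D$ and hence meets one of these countably many neighbourhoods, so $\mathscr{V}$ is countable and produces a countable subcover of the original cover. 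As a paracompact Hausdorff space is normal and hence completely regular, and $X$ is locally Lindel\"of, Theorem~\ref{CDXF} applies and yields $C_l(X)\cong C_0(Y)$ with $Y=\lambda_{C_l(X)}X$, its uniqueness, and the equivalence of (a)--(e).

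For the locally compact case I would first record the analogue of~(\ref{JHGK}). By Theorem~5.1.27 of~\cite{E}, write $X=\bigcup_{i\in I}X_i$ with the $X_i$ nonempty Lindel\"of open-closed and $\ell(X)=|I|$ when $I$ is infinite, and set $H_J=\bigcup_{i\in J}X_i$. If $f\in C_l(X)$ then $\mathrm{supp}(f)$ is Lindel\"of, hence meets only countably many of the pairwise-disjoint clopen $X_i$ and so lies in some $H_J$ with $J$ countable, giving $\mathrm{int}_{\beta X}\mathrm{cl}_{\beta X}\mathrm{Coz}(f)\subseteq\mathrm{cl}_{\beta X}H_J$; conversely, for countable $J$ the set $H_J$ is Lindel\"of and open-closed, so $\chi_{H_J}\in C_l(X)$ and $\mathrm{cl}_{\beta X}H_J=\mathrm{int}_{\beta X}\mathrm{cl}_{\beta X}H_J\subseteq\lambda_{C_l(X)}X$. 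This gives
\[\lambda_{C_l(X)}X=\bigcup\{\mathrm{cl}_{\beta X}H_J:J\subseteq I\text{ is countable}\}.\]
With this in hand, (1), (2) and (3) transfer from Theorem~\ref{CTTF}. For (3) one covers a $\sigma$-compact $A\subseteq Y$ by the open sets $\mathrm{cl}_{\beta X}H_J$, applies compactness on each piece, and amalgamates countably many countable index sets into a single countable $J^{*}$, so that $A\subseteq\mathrm{cl}_{\beta X}H_{J^{*}}$, a compact subset of $Y$; hence $C_0(Y)=C_{00}(Y)$ by Problem~7G.2 of~\cite{GJ}, and (1) follows because a countably infinite subspace of $Y$ is $\sigma$-compact, hence contained in a compact subspace of $Y$ and so has a limit point in $Y$. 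For (2), choosing $x_i\in X_i$ gives a closed discrete $D=\{x_i:i\in I\}$, uncountable since $X$ is non-Lindel\"of forces $I$ uncountable; using $\mathrm{cl}_{\beta X}D=\beta D$ (as $D$ is closed in the normal space $X$) together with the open-closedness of each $\mathrm{cl}_{\beta X}H_J$, the computation of Theorem~\ref{CTTF} identifies $Y\cap\mathrm{cl}_{\beta X}D$ with $D_\lambda$, which is not normal, whence $Y$ is not normal.

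Finally, for (4) the lower bound is routine: Tarski's theorem supplies a family $\mathscr{I}$ of $|I|^{\aleph_0}=\ell(X)^{\aleph_0}$ countable infinite subsets of $I$ with pairwise finite intersections, and the functions $\chi_{H_J}$ with $J\in\mathscr{I}$ are linearly independent, so $\dim C_l(X)\geq\ell(X)^{\aleph_0}$. Writing $C_l(X)=\bigcup\{C_b(H_J):J\subseteq I\text{ countable}\}$ (each element extended by $\mathbf 0$ off $H_J$), there are at most $|I|^{\aleph_0}=\ell(X)^{\aleph_0}$ index sets $J$, so the reverse inequality reduces to bounding $|C_b(H_J)|$ for countable $J$. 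This is the step I expect to be the main obstacle. In Theorem~\ref{CTTF} the pieces are separable metrizable, so $H_J$ is separable and $|C_b(H_J)|\leq\mathfrak{c}^{\aleph_0}=2^{\aleph_0}$; here $H_J$ is merely Lindel\"of---equivalently $\sigma$-compact, being also locally compact---and a $\sigma$-compact space need not be separable, so this cardinality bound is no longer automatic. Controlling $|C_b(H_J)|$ (or establishing the equality under an appropriate restriction on the pieces, for instance that they are second countable) is precisely where the direct analogy with Theorem~\ref{CTTF} breaks down and where the substantive work in (4) lies.
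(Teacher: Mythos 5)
Your treatment of the representation statement, the equivalences (a)--(e), and parts (1)--(3) is essentially the paper's own proof: the author verifies the hypotheses of Theorem \ref{CDXF} for $\mathscr{P}=$ the Lindel\"{o}f property and $\mathscr{Q}=$ paracompactness (citing Corollary 5.1.29 and Theorem 5.1.25 of \cite{E} where you instead prove the dense-Lindel\"{o}f-subspace fact directly --- a harmless difference), and then simply declares that (1)--(4) are ``analogous to the ones we have given for the corresponding parts of Theorem \ref{CTTF}'', using the decomposition of Part 1.4. Your identity $\lambda_{C_l(X)}X=\bigcup\{\mathrm{cl}_{\beta X}H_J:J\subseteq I\mbox{ countable}\}$ and your transfers of (1)--(3) are exactly what that one-line instruction unpacks to, and they are correct.

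The obstacle you isolate in (4) is genuine --- and it is a defect of the paper rather than of your proposal, since the paper offers nothing there beyond the word ``analogous'', and the analogy breaks precisely at the step you name. In Theorem \ref{CTTF} the upper bound rests on $|C_b(H_J)|\leq 2^{\aleph_0}$ for countable $J$, which comes from separability of $H_J$; a Lindel\"{o}f locally compact piece can have arbitrarily large weight, and then no such bound holds. In fact (4) as stated is false. Let $\kappa>\aleph_1^{\aleph_0}$ and let $X$ be the topological sum of the one-point compactification $\alpha D$ of a discrete space $D$ of cardinality $\kappa$ and a discrete space of cardinality $\aleph_1$. Then $X$ is locally compact, paracompact and non-Lindel\"{o}f with $\ell(X)=\aleph_1$, while the functions $\chi_{\{d\}}$ for $d\in D$ lie in $C_l(X)$ (their supports are singletons, hence Lindel\"{o}f) and are linearly independent, so
\[\dim C_l(X)\geq\kappa>\aleph_1^{\aleph_0}=\ell(X)^{\aleph_0}.\]
Thus only your Tarski lower bound $\dim C_l(X)\geq\ell(X)^{\aleph_0}$ survives in general (for non-Lindel\"{o}f $X$, just as the proof of Theorem \ref{CTTF}(4) implicitly assumes $X$ non-separable), and equality requires an extra hypothesis forcing $|C_b(H_J)|\leq 2^{\aleph_0}$ --- for instance second countability of the pieces, as you suggest, which essentially returns one to the setting of Theorem \ref{CTTF}. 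Your refusal to claim (4) outright was the correct judgement, not a shortfall.
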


\begin{proof}
Let $\mathscr{P}$ be the Lindel\"{o}f property and $\mathscr{Q}$ be paracompactness. We need to know that the pair $\mathscr{P}$ and $\mathscr{Q}$ satisfies the assumption of Theorem \ref{CDXF}. It is well known that $\mathscr{Q}$ is hereditary with respect to closed subspaces. (See Corollary 5.1.29 of \cite{E}.) It is obvious that $\mathscr{P}$ and $\mathscr{Q}$ satisfy conditions (1) and (3) of Theorem \ref{CDXF}. It is also known that any paracompact space with a dense Lindel\"{o}f subspace is Lindel\"{o}f (see Theorem 5.1.25 of \cite{E}), that is, $\mathscr{P}$ and $\mathscr{Q}$ satisfy conditions (2) of Theorem \ref{CDXF}. The first part of the theorem together with the equivalence of conditions (a)--(c) now follows from Theorem \ref{CDXF}. The proofs for (1)--(4) are analogous to the ones we have given for the corresponding parts of Theorem \ref{CTTF}. (One needs to assume a representation for $X$ as given in Part 1.4.)
\end{proof}

\section*{Acknowledgement}

The author wishes to thank the anonymous referee for reading the manuscript, the prompt report given almost within a month, and comments which motivated the author to make numerous improvements.

\bibliographystyle{amsplain}

\end{document}